\title{A note on embracing exchange sequences in oriented matroids}
\newlength{\bibitemsep}\setlength{\bibitemsep}{.1\baselineskip plus .05\baselineskip minus .05\baselineskip}
\newlength{\bibparskip}\setlength{\bibparskip}{1.2pt}
\let\oldthebibliography\thebibliography
\renewcommand\thebibliography[1]{%
  \oldthebibliography{#1}%
  \setlength{\parskip}{\bibitemsep}%
  \setlength{\itemsep}{\bibparskip}%
}
\renewcommand{\paragraph}{%
  \@startsection{paragraph}{4}%
  {\z@}{1.6ex \@plus 1ex \@minus .2ex}{-0.5em}%
  {\normalfont\normalsize\bfseries}%
}
\theoremstyle{plain}
\newtheorem{thm}{Theorem}[section]
\newtheorem{cla}[thm]{Claim}
\newtheorem{conj}[thm]{Conjecture}
\theoremstyle{definition}
\newtheorem{ex}[thm]{Example}
\newcommand*{\claimproofname}{Proof of claim.}
\newcommand{\leqnomode}{\tagsleft@true}
\newcommand{\reqnomode}{\tagsleft@false}
 \newcommand{\linkdest}[1]{\Hy@raisedlink{\hypertarget{#1}{}}}
\def\final{0}  
\def\iflong{\iffalse}
\newcommand{\kristof}[1]{{\color{red}[{ \textbf{Kristóf:}  #1}]\marginpar{\color{red}*}}}
\newcommand{\benedek}[1]{{\color{blue}[{ \textbf{Benedek:}  #1}]\marginpar{\color{blue}*}}}
\newcommand{\kristof}[1]{}
\newcommand{\benedek}[1]{}
\newcommand{\bR}{\mathbb{R}}
\newcommand{\cB}{\mathcal{B}}
\newcommand{\cC}{\mathcal{C}}
\let\Right\bigr
\let\Left\bigl
\def\bigr#1{\Right#1\@ifnextchar){\!\bigr}{}}
\def\bigl#1{\Left#1\@ifnextchar({\!\bigl}{}}
\author{
Kristóf Bérczi\thanks{MTA-ELTE Matroid Optimization Research Group and HUN-REN-ELTE Egerváry Research Group, Department of Operations Research, ELTE Eötvös Loránd University, and HUN-REN Alfréd Rényi Institute of Mathematics, Budapest, Hungary. Email: \texttt{kristof.berczi@ttk.elte.hu}.}
\and
Benedek Nádor\thanks{Department of Operations Research, ELTE Eötvös Loránd University, Budapest, Hungary. Email: \texttt{nador.benedek@gmail.com}.} 
}
\date{}
\begin{document}
\maketitle

\thispagestyle{empty}
\begin{abstract} 
An open problem in convex geometry asks whether two simplices $A,B\subseteq\bR^d$, both containing the origin in their convex hulls, admit a polynomial-length sequence of vertex exchanges transforming $A$ into $B$ while maintaining the origin in the convex hull throughout. We propose a matroidal generalization of the problem to oriented matroids, concerning exchange sequences between bases under sign constraints on elements appearing in certain fundamental circuits. We formulate a conjecture on the minimum length of such a sequence, and prove it for oriented graphic matroids of directed graphs. We also study connections between our conjecture and several long-standing open problems on exchange sequences between pairs of bases in unoriented matroids.
\medskip

\noindent \textbf{Keywords:} Carathéodory theorem, Directed graphs, Exchange sequences, Matroids, Oriented matroids
\end{abstract}
\section{Introduction}
\label{sec:intro}

Carathéodory's theorem~\cite{caratheodory1911variabilitatsbereich} is a cornerstone of convex geometry, describing how points in a convex hull can be represented. It states that if a point $x$ lies in the convex hull of a set $P\subseteq\bR^d$, then there exists a subset of at most $d+1$ points of $P$ whose convex hull already contains $x$. In other words, every point in a convex hull admits a sparse representation, with the bound $d+1$ being best possible. A fundamental extension of this classical result is the Colorful Carathéodory theorem, introduced by Bárány~\cite{barany1982generalization}. In this setting, one is given $d+1$ point sets, called color classes, each containing the origin in its convex hull. The theorem guarantees that one can select a single point from each color class so that the origin lies in the convex hull of the $d+1$ chosen points. Equivalently, the colorful version ensures the existence of a ``rainbow'' simplex embracing the origin. Both the classical and colorful Carathéodory theorems have played a central role in combinatorial geometry, optimization, and matroid theory, and continue to motivate algorithmic and structural questions.

From an algorithmic point of view, there is a significant difference between the classical and the colorful variants. The classical theorem can be realized using standard linear programming: a basic feasible solution of the corresponding linear system yields a convex combination involving at most $d+1$ points in polynomial time. In contrast, constructive proofs of the colorful variant rely on iterative linear programming steps, and no strongly polynomial-time algorithm is known for the general case, although special cases and approximate variants admit more efficient procedures; we refer the interested reader to~\cite{mulzer2018computational} for further details.

A natural relaxation of this problem arises when there are only $k<d+1$ point sets, and the goal is to find a colourful set containing, for each $i \in [k]$, $d_i$ points from the $i$-th set. When $k=2$ and the target sizes $(d_1,d_2)$ satisfy $d_1+d_2 = d+1$, the problem admits a polynomial-time solution, as shown by Meunier, Mulzer, Sarrabezolles, and Stein~\cite{meunier2017rainbow}. In an effort to better understand this 2-colour case and to explore the possibility of extending polynomial-time algorithms to higher numbers of colours, Caoduro, Khodamoradi, Paat, and Shepherd~\cite{marco2024} raised the following problem: {\it Given two 
0-embracing sets $A,B\subseteq\bR^d$, that is, sets whose convex hulls contain the origin, is it always possible to transform $A$ into $B$ by a polynomial-length sequence of exchanges, where each step replaces a point of the current set with a point from $A\cup B$ and every intermediate set remains 0-embracing?} We call the minimum number of such exchanges the {\it $0$-embracing exchange distance} of $A$ and $B$. 

To see this, form a $(d+1)\times(2d+2)$ matrix $M$ whose columns are the coordinates of the points in $A\cup B$, augmented by an additional row consisting entirely of $1$'s. Let $b\in\mathbb{R}^{d+1}$ be the vector whose first $d$ entries are $0$ and whose last entry is $1$. Each exchange of a point in $A$ with a point in $B$ can then be interpreted as traversing an edge of the polyhedron $P=\{x\in\mathbb{R}^{2d+2}\mid Mx=b,; x\ge 0\}$. Therefore, if $G$ denotes the $1$-skeleton of $P$, then following a path in $G$ yields a sequence of exchanges in which, at each step, one point is removed from the current simplex and replaced by another point from $A\cup B$, while preserving the property that the origin lies in the convex hull. Based on computational experiments, Caoduro, Khodamoradi, Paat, and Shepherd~\cite{marco2024} proposed the following conjecture.

\begin{conj}[Caoduro, Khodamoradi, Paat, Shepherd]\label{conj:embracing}
The $0$-embracing exchange distance of two $0$-embracing $d$-dimensional simplices is at most $d+1$.
\end{conj}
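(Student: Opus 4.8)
\medskip

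The plan is to recast the problem in linear-programming terms, exactly as in the construction above, and then to attack the resulting distance bound by induction on the dimension $d$. Writing $P=\{x\in\bR^{2d+2}\mid Mx=b,\ x\ge 0\}$, the simplices $A$ and $B$ are two vertices of $P$ whose active facet sets $\{x_{b_i}=0\}$ and $\{x_{a_i}=0\}$ are complementary, i.e.\ disjoint. Since $\dm P\le d+1$ and $P$ has at most $2d+2$ facets, the target value $d+1$ is precisely the Hirsch bound $(2d+2)-(d+1)$; as the Hirsch conjecture is false for general polytopes, the whole point will be to exploit the special structure of $P$ coming from the all-ones row of $M$ and from $b=e_{d+1}$.

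The first ingredient I would establish is a clean description of the legal single exchanges out of $A$. Writing $F_i=\conv(A\setminus\{a_i\})$ for the facets of $A$ and $K_i=\{\sum_{k\ne i}\mu_k a_k\mid \mu_k\ge 0\}$ for the cone that $F_i$ subtends at the origin, one checks that $(A\setminus\{a_i\})\cup\{y\}$ is $0$-embracing if and only if $-y\in K_i$. Because $0$ lies in the interior of $\conv(A)$, the cones $K_0,\dots,K_d$ tile $\bR^d$, so for every $b_j$ there is an index $i$ with $-b_j\in K_i$; hence a legal first exchange replacing some $a_i$ by any prescribed $b_j$ always exists. I would also record a scaling invariance: replacing each point $v$ by $t_v v$ with $t_v>0$ preserves the $0$-embracing property of every subset, and therefore preserves the whole exchange problem.

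For the induction I would fix one vertex $b_j$ of $B$ to be installed permanently by the first exchange, and then reduce the task ``transform $(A\setminus\{a_i\})\cup\{b_j\}$ into $B$, keeping $b_j$ throughout'' to a $(d-1)$-dimensional instance by central projection from $b_j$ onto a hyperplane $h$. Using the scaling invariance I would first blow $B$ far out (multiply every $b_k$ by a large $T$); then every $b_k$ becomes a vertex of $\conv(A\cup B\cup\{0\})$, all remaining points and the origin lie strictly on the far side of the supporting hyperplane through $b_j$, and the perspective map from $b_j$ is well defined on all of them. The projected sets $A''=(A\setminus\{a_i\})'$ and $B''=(B\setminus\{b_j\})'$ are then $0'$-embracing $(d-1)$-simplices, where $0'$ is the image of the origin, so by the inductive hypothesis they are joined by a sequence of at most $d$ exchanges; prepending the first exchange $a_i\to b_j$ would yield $d+1$ in total.

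The hard part, and the reason the clean reduction does not simply settle the conjecture, is the lifting step. Central projection from $b_j$ records only the \emph{direction} of each point as seen from $b_j$, i.e.\ the projective data of the configuration, whereas $0$-embracing is an \emph{affine} condition. Concretely, ``$0'\in\conv(S'')$'' only says that the ray from $b_j$ through the origin meets $\conv(S'')$; it does not distinguish whether the origin lies on the segment between $b_j$ and that base facet, so that $\{b_j\}\cup S$ is genuinely $0$-embracing, or beyond the facet, so that it is not. Thus the implication needed for lifting, namely $0'$-embracing $\Rightarrow$ $0$-embracing, can fail, and scaling, which only controls the horizon and hence the \emph{definedness} of the projection, does not repair this radial information loss. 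Making the induction go through would therefore require tracking, for every intermediate mixed simplex, the extra affine inequality pinning the origin to the correct side of the moving base facet, i.e.\ a genuinely affine rather than merely matroidal control of the exchange path. I expect this to be the main obstacle, which is consistent with the clean, contraction-friendly cases, such as the oriented graphic matroids treated here, being exactly those where this affine bookkeeping can be carried out.
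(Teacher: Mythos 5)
This statement is a conjecture, not a theorem: the paper offers no proof of it, and explicitly states that it remains open (its own contribution is a generalization to oriented matroids, Conjecture~\ref{conj:matroid1}, together with a proof only for oriented graphic matroids of directed graphs, Theorem~\ref{thm:directed}, via a two-phase rerouting of the directed $s$-$t$ path). So there is no ``paper proof'' to match your attempt against, and the relevant question is simply whether your argument closes the conjecture. It does not, and you have correctly diagnosed where it breaks: the central-projection induction loses exactly the information the statement is about. The $0$-embracing property is an affine condition, while projection from $b_j$ retains only projective (directional) data; knowing that the ray from $b_j$ through the origin meets the convex hull of the projected set does not tell you on which side of the opposite facet the origin sits, so ``$0'$-embracing'' for the projected $(d-1)$-dimensional configuration does not lift back to ``$0$-embracing'' for the intermediate $d$-dimensional simplices. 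Scaling $B$ outward only ensures the projection is well defined; it cannot restore the lost radial information, since the intermediate simplices mix points of $A$ and $B$ and their facets move during the exchange sequence. Hence the inductive step fails, and what remains is a correct but partial collection of observations (the LP/polyhedron reformulation, the cone criterion $-y\in K_i$ for a legal single exchange, and the existence of a legal first exchange installing any prescribed $b_j$), none of which bounds the length of a full sequence.

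Two further cautions. First, the Hirsch-bound coincidence you note is already observed in the paper, and it cuts the other way: since the Hirsch conjecture is false in general, the numerology $(2d+2)-(d+1)=d+1$ gives motivation, not a proof strategy, unless one exploits the special structure of $P$ (transportation-like constraint matrix with an all-ones row), which your argument does not yet do. Second, your cone criterion only characterizes which single exchanges are legal \emph{from $A$ itself}; after the first exchange the current set is a mixed simplex, and controlling legal exchanges from mixed simplices is precisely the affine bookkeeping you identify as missing. A route more in the spirit of the paper would be to work in the oriented-matroid language (anchored fundamental circuits of $\mathbf{0}$) and look for an analogue of the monotone exchange sequence that the paper constructs in the graphic case; but for affine oriented matroids that, too, is open.
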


Although the Hirsch conjecture is now known to be false in general, Conjecture~\ref{conj:embracing} would follow from its original formulation (see~\cite{dantzig}), since the polyhedron $P$ has $2d+2$ facets and dimension $2d+2-(d+1)=d+1$. Consequently, the Hirsch bound would guarantee the existence of a path of length at most $d+1$ between the corresponding vertices in the $1$-skeleton of $P$.

Conjecture~\ref{conj:embracing} remains open. The problem, however, resembles several long-standing conjectures concerning the symmetric exchange distance of basis-pairs in matroids, such as those proposed by Gabow~\cite{gabow1976decomposing}, White~\cite{white1980unique}, and Hamidoune~\cite{cordovil1993bases}; see Section~\ref{sec:gabow} for details. This observation naturally suggests that the problem may, in fact, be a special case of a more general, abstract question concerning matroids. 

\subsection{Basic notation}
\label{sec:not}

\paragraph{General notation.} For integers $k$ and $\ell$ with $k\le \ell$, we write $[k,\ell]={k,k+1,\dots,\ell}$; when $k=1$, we simply write $[\ell]=[1,\ell]$. Given a ground set $S$, a set $X\subseteq S$ and an element $y\in S$, the sets $X\setminus \{y\}$ and $X\cup \{y\}$ are often abbreviated as $X-y$ and $X+y$, respectively.  

\paragraph{Oriented matroids.} For basic definitions on oriented matroids, we refer to~\cite{bjorner1999oriented}.

An {\it oriented matroid} $M=(E,\mathcal{C})$ on a finite ground set $E$ can be defined via its collection $\mathcal{C}$ of {\it signed circuits}, where each $C\in\mathcal{C}$ is a pair $(C^+,C^-)$ of disjoint subsets of $E$ representing the elements with positive and negative signs, respectively, satisfying the following axioms.
\begin{enumerate}[label=(C\arabic*)]\itemsep0em
\item Nontriviality: $\emptyset\notin\mathcal{C}$.
\item Symmetry: If $C\in\mathcal{C}$, then $-C=(C^-,C^+)\in\mathcal{C}$.
\item Incomparability: If $C_1,C_2\in\mathcal{C}$ and $C_1\subseteq C_2$, then $C_1=C_2$ or $C_1=-C_2$.
\item Circuit elimination: If $C_1,C_2\in\mathcal{C}$, $C_1\neq -C_2$, and $e\in C_1^+\cap C_2^-$, then there exists $C_3\in\mathcal{C}$ such that $C_3^+\subseteq (C_1^+\cup C_2^+)-e$ and $C_3^-\subseteq (C_1^-\cup C_2^-)-e$.
\end{enumerate}
The circuit family of the {\it underlying unoriented matroid} $\underline{M}$ is obtained by ignoring the signs of the circuits. A {\it basis} of $M$ is a basis of $\underline{M}$, and the {\it fundamental circuit} of $e\notin B$ with respect to a basis $B$ is the unique circuit of $\underline{M}$ contained in $B+e$. Given a basis $B$ and an element $e\in E\setminus B$, let $C(B,e)$ denote the signed fundamental circuit of $e$ with respect to $B$, oriented so that $e$ has negative sign; we call this the {\it anchored fundamental circuit} of $e$ with respect to $B$. A basis $B$ is {\it $e$-embracing} if, in the anchored fundamental circuit of $e$ with respect to $B$, every element distinct from $e$ is positive. For two $e$-embracing bases $A$ and $B$, a sequence of bases $A=T_0,T_1,\dots,T_{q-1},T_q=B$ is an {\it $e$-embracing exchange sequence} if $|T_i\triangle T_{i-1}|=1$ and $T_i$ is $e$-embracing for each $i\in[q]$. One can then define the {\it $e$-embracing exchange distance} between two $e$-embracing bases as the minimum length of an $e$-embracing sequence that transforms $A$ into $B$ if such a sequence exists, and $+\infty$ otherwise.

\paragraph{Directed graphs.} For a directed graph $D=(V,E)$ with vertex set $V$ and arc set $E$, we write $n=|V|$ and $m=|E|$. Given a subset of arcs $F\subseteq E$, the sets of arcs in $F$ {\it entering} and {\it leaving} a vertex $v\in V$ are denoted by $\delta^{in}_F(v)$ and $\delta^{out}_F(v)$, respectively. For $u,v\in V$, a {\it directed path} from $u$ to $v$ is a sequence of arcs $(e_1,\dots,e_k)$, where $e_i=v_{i-1}v_i\in E$ for $i\in[k]$, with $v_0=u$, $v_k=v$, and all vertices $v_0,\dots,v_k$ being distinct. A {\it spanning tree} of $D$ is a subset $T\subseteq E$ whose underlying undirected graph forms a spanning tree on $V$, while a {\it cycle} of $D$ is a subset $C\subseteq E$ whose underlying undirected graph forms a cycle. For $u,v\in V$, we denote by $T[u,v]$ the \textit{unique path in $T$ from $u$ to $v$}, traversed starting at $u$ and ending at $v$, with arcs oriented as in $D$. Note that this path need not be a directed $u$-$v$ path. Any directed graph $D=(V,E)$ defines an oriented matroid by taking the graphic matroid on $E$ and encoding orientations through signed circuits. Each circuit corresponds to a cycle in the underlying undirected graph: choosing a traversal of the cycle, an edge is signed positive if its orientation in $D$ agrees with the traversal and negative otherwise. The resulting signed circuits define the {\it oriented graphic matroid} of $D$. Note that, given an arc $e=uv\in E$, a spanning tree $B$ is $e$-embracing precisely when $B[u,v]$ is a directed $u$-$v$ path in $B$. By a slight abuse of notation, we call a spanning tree $B$ \textit{$uv$-embracing} if $B[u,v]$ is a directed path, no matter whether $uv$ is an arc of $D$ or not.

\subsection{Matroidal extension}
\label{sec:our}

Any finite set $E\subseteq\bR^d$ defines an affine matroid $M=(E,\cC)$, where a subset $X\subseteq E$ is independent if its vectors are affinely independent. A minimal dependent set in $M$ is called a circuit. Each circuit can be assigned a sign pattern based on a nontrivial linear combination of its elements that sums to zero, resulting in a signed circuit. Reversing all signs yields another valid signed circuit, so each circuit gives rise to a pair of oppositely signed circuits. In this way, one obtains an oriented matroid from the underlying affine matroid. If $E$ is full-dimensional, then the bases of $M$ correspond to subsets of $E$ defining simplices; note that the rank of $M$ is $d+1$. A basis contains the origin precisely when, in the signed fundamental circuit of $\mathbf{0}$ oriented so that $\mathbf{0}$ has negative sign, every other element is positive. Equivalently, the vertices of the corresponding simplex then form a $\mathbf{0}$-embracing basis. Therefore, Conjecture~\ref{conj:embracing} asserts that the $\mathbf{0}$-embracing exchange distance between any two $\mathbf{0}$-embracing bases of the oriented affine matroid is at most the dimension plus one. As an extension to general oriented matroids, we propose the following.

\begin{conj}\label{conj:matroid1}
    The $e$-embracing exchange distance of two $e$-embracing bases of a rank-$r$ oriented matroid is at most $r$.   
\end{conj}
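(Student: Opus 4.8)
The plan is to realize the $e$-embracing bases as the vertices of a combinatorial analogue of the polyhedron $P$ from the introduction, and to prove a Hirsch-type diameter bound for it. Concretely, I would pass to the reformulation via fundamental cocircuits. For a basis $B$ with $e\notin B$ and an element $f\in B$, let $D(B,f)$ denote the signed fundamental cocircuit, i.e. the unique signed cocircuit supported on $(E\setminus B)+f$. Orthogonality of $C(B,e)$ and $D(B,f)$, whose supports meet in exactly $\{e,f\}$, forces the sign of $f$ in $C(B,e)$ to be determined by the sign of $e$ in $D(B,f)$. Hence $B$ is $e$-embracing precisely when $e$ lies on a fixed (``inner'') side of every fundamental cocircuit of $B$ --- the oriented-matroid rendering of ``$e$ sits inside the simplex spanned by $B$.'' Under this dictionary a single exchange is a pivot across one cocircuit, and the graph whose vertices are the $e$-embracing bases and whose edges are $e$-embracing exchanges is exactly the graph of feasible bases of an oriented matroid program in which $e$-embracing is the feasibility condition. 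The target becomes: this feasible-region graph has diameter at most $r$. In the realizable case this is the Hirsch bound for $P$, which has $2r$ facets in dimension $r$ and so has Hirsch bound $2r-r=r$; thus the number $r$ is the natural combinatorial Hirsch bound for a ground set of size $2r$.

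I would first reduce to the disjoint case $A\cap B=\emptyset$. If $x\in A\cap B$, contract $x$: the bases $A-x$ and $B-x$ of the rank-$(r-1)$ oriented matroid $M/x$ are again $e$-embracing, since the fundamental circuit of $e$ with respect to a basis containing $x$ merely loses $x$ upon contraction while all surviving signs, $x$ having been positive, remain positive. An $e$-embracing sequence in $M/x$ of length at most $r-1$ would then lift to one in $M$ of the same length by reinserting $x$ into every basis --- beating the required bound --- provided every lifted intermediate basis is $e$-embracing in $M$. This converse to the contraction step is exactly the catch: a basis $T'$ of $M/x$ that is $e$-embracing need not have $T'+x$ $e$-embracing in $M$, because after contraction $x$ may reappear with a negative sign in the fundamental circuit of $e$. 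I would therefore have to establish that the disjoint case is extremal and that a common element can always be frozen in the basis along some shortest route without being driven into such a negative position.

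The core is thus the disjoint case, where the unconstrained distance already equals $r$, so the conjecture demands exactly $r$ exchanges with no detours: an order in which the elements of $B$ are pivoted in one at a time, each intermediate basis staying $e$-embracing. I would attack this by a ``feasible simplex method'' on the oriented matroid program, starting from $A$ and repeatedly seeking an incoming element $b\in B\setminus T$ together with an element of $T$ to expel so that the new basis is still $e$-embracing and one step closer to $B$. The existence of such a progress pivot I would try to extract from circuit elimination applied to $C(T,e)$ and $C(T,b)$ (dually, from cocircuit elimination), using the embracing sign patterns of both endpoints $A$ and $B$ to certify that among the candidate incoming elements of $B$ at least one keeps $e$ on the inner side of every fundamental cocircuit.

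The main obstacle is precisely guaranteeing this sign-preserving progress pivot, equivalently ruling out configurations in which every exchange that brings in an element of $B$ pushes $e$ outside the simplex and forces a detour. This is exactly where a general Hirsch-type bound is needed, and where the difficulty concentrates: for oriented graphic matroids the directed-path description of $uv$-embracing spanning trees supplies an explicit reroute, whereas general oriented matroids offer no comparable combinatorial structure, and even the realizable case already encodes a (special but genuinely nontrivial) Hirsch instance. I expect that settling the conjecture requires either isolating the extra structure of this particular $2r$-element family that makes its feasible region meet the Hirsch bound, or devising a dedicated oriented-matroid pivoting rule proven to terminate within $r$ steps; proving that no detour is ever needed beyond the $r$-step budget is the crux.
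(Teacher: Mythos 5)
You should first note a mismatch of kind: the statement you set out to prove is Conjecture~\ref{conj:matroid1}, which the paper itself leaves open. The paper contains no proof of it in general; it proves only the special case of oriented graphic matroids (Theorem~\ref{thm:directed}), by an explicit two-phase spanning-tree argument --- first installing the arcs of $B[s,t]$ one at a time via Claim~\ref{cl:onestep}, then swapping the remaining arcs off the fixed directed $s$-$t$ path --- and it even proves the slightly stronger monotone form there. Your text, to its credit, is honest that it is a plan rather than a proof, but as a proof attempt it has genuine gaps at exactly the places you flag. First, the reduction to disjoint bases via contraction does not go through: as you yourself observe, a basis $T'$ of $M/x$ that is $e$-embracing need not lift to an $e$-embracing basis $T'+x$ of $M$, and nothing in your plan repairs this; you would need a freezing argument that you do not supply. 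Second, the ``progress pivot'' is the entire content of the conjecture, and circuit elimination cannot deliver it in one shot: applying (C4) to $C(T,e)$ and $C(T,b)$ produces \emph{some} signed circuit with signs controlled only inside the union of the two supports, which need not be the anchored fundamental circuit of $e$ with respect to the new basis, and the embracing condition is $r$ simultaneous sign constraints (one per fundamental cocircuit, via the orthogonality dictionary you correctly set up), of which a single elimination certifies almost nothing.

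Two further points would sink the plan even if the pivot existed. The appeal to a ``Hirsch-type bound'' is not available as a theorem: the Hirsch conjecture is false in general (as the paper notes), and the paper invokes its original formulation only as motivation for Conjecture~\ref{conj:embracing}, which is itself open even in the realizable case --- so ``the natural combinatorial Hirsch bound for a ground set of size $2r$'' is a heuristic, not a lemma. And in the abstract oriented-matroid setting the paper stresses that even the \emph{existence} of any $e$-embracing exchange sequence between two $e$-embracing bases is open (contrast Example~\ref{ex:graphic}, where shortest unoriented sequences already fail); your diameter argument presupposes connectivity of the feasible-basis graph, which would have to be proved first. What is sound in your write-up --- the cocircuit-orthogonality reformulation of $e$-embracing, the observation that disjoint bases force exactly $r$ steps so the conjecture demands a strictly monotone sequence, and the diagnosis that the difficulty is ruling out forced detours --- matches the paper's framing, but none of it closes the gap; the conjecture remains open beyond the graphic case.
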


Closely related problems have been considered in the context of bounding the diameter of the cocircuit graph of an oriented matroid; see \cite{adler2020diameterscocircuitgraphsoriented}. That work studies linear and polynomial diameter bounds, and establishes positive results for oriented matroids of low rank and corank, as well as for small ground sets via computer verification. In contrast to Conjecture~\ref{conj:embracing}, a key difference in the abstract oriented matroid setting is that it remains open whether any exchange sequence between two bases meeting the conjecture's conditions exists at all, regardless of its length. This is in sharp contrast to the unoriented setting: if $A$ and $B$ are bases of an unoriented matroid $M$, the exchange axiom guarantees the existence of a sequence of exchanges transforming $A$ into $B$, and the shortest such sequence has length $|A \setminus B|$. No analogous statement holds in the oriented setting, not even for the oriented graphic matroid of directed graphs; see Example~\ref{ex:graphic}.

As a first step towards the general setting, we verify that the conjecture holds when $M$ is the oriented graphic matroid of a directed graph (Theorem~\ref{thm:directed}). In fact, we prove a slightly stronger statement: there exists an exchange sequence of length at most $r$ which is monotone in the sense that the intersection with the target basis does not decrease at every step. We also highlight the connection between our problem and certain conjectures on exchange distances of basis-pairs, and show that no analogous strengthening of the conjecture is possible (Example~\ref{ex:symmetric}).

\section{Directed graphs}
\label{sec:directed}

This section is devoted to proving Conjecture~\ref{conj:matroid1} for the oriented graphic matroid of a directed graph. This result is particularly interesting in light of the fact that, unlike in the unoriented setting, one cannot necessarily transform one basis $A$ into another $B$ by using $|A \setminus B|$ exchanges, as shown by the following example. 

\begin{ex}\label{ex:graphic}
    Let $D=(V,E)$ denote the directed graph where $V=\{s,t,u,v,w\}$ with arc set $E=\{su,sv,uw,vw,ut,vt\}$. Let $A=\{su,uw,vw,ut\}$ and $B=\{sv,uw,vw,vt\}$; see Figure~\ref{fig:nomon}. Note that transforming $A$ into $B$ with $|A\setminus B|$ exchanges requires, at each step, exchanging one arc from $A\setminus B$ with one from $B\setminus A$. Hence, the first arc to remove from $A$ must be either $su$ or $ut$. However, for either of these arcs, there is no arc in $B\setminus A$ whose addition would restore a directed $s$-$t$ path, showing that no exchange sequence of length two exists.

    It is worth mentioning that exchanging $vw$ with $sv$, then $ut$ with $vt$, and finally $su$ with $vw$ yields an $st$-embracing exchange sequence that transforms $A$ into $B$.
\end{ex}

\begin{figure}
\centering
\begin{minipage}[t]{0.48\textwidth}
  \centering
  \includegraphics[height=3cm]{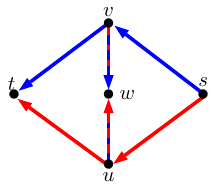}
  \captionof{figure}{Illustration of Example~\ref{ex:graphic}. Red and blue arcs correspond to $A$ and $B$, respectively. Note that $uw$ and $vw$ are contained in both spanning trees.}
  \label{fig:nomon}
\end{minipage}\hfill
\begin{minipage}[t]{0.48\textwidth}
  \centering
  \includegraphics[height=3cm]{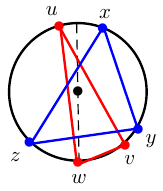}
  \captionof{figure}{Illustration of Example~\ref{ex:symmetric}. Red and blue triangles correspond to simplices $A$ and $B$, respectively. The only possible symmetric exchange uses $v$ and $y$.}
  \label{fig:symmetric}
\end{minipage}
\end{figure}

For ease of discussion, we restate the conjecture in graph-theoretic terms, obtaining a clean formulation about directed trees that avoids matroid terminology.

\begin{thm}\label{thm:directed}
Let $D=(V,E)$ be a directed graph and $s,t\in V$. Suppose $A$ and $B$ are $st$-embracing spanning trees of $D$. Then there exist $st$-embracing spanning trees $A=T_0,T_1,\dots,T_{q-1},T_q=B$ with $q\le n-1$ such that $|T_{i-1}\triangle T_i|=2$ and $|T_i\cap B|\ge |T_{i-1}\cap B|$ for all $i\in[q]$.
\end{thm}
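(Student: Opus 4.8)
The plan is to build a monotone sequence that only ever inserts edges of $B$, so that monotonicity comes for free and the whole task reduces to a counting problem. First I would record the following elementary but crucial observation: if $T$ is a spanning tree, $e\notin T$, and $f$ is an edge of the fundamental cycle of $e$ with $f\notin T[s,t]$, then $T-f+e$ is again a spanning tree whose $s$--$t$ path is still $T[s,t]$; in particular $T-f+e$ is $st$-embracing. Thus the $st$-embracing constraint is only ever active when we delete an edge lying on the current directed $s$--$t$ path --- every off-path deletion is automatically safe. Next I would restrict attention to sequences in which every inserted edge belongs to $B\setminus T_{i-1}$. Under this restriction each step changes $|T_i\cap B|$ by either $+1$ (when the deleted edge is not in $B$; call this a \emph{progress} step) or $0$ (when the deleted edge also lies in $B$; a \emph{lateral} step), so the sequence is monotone by construction, and its length equals $|B\setminus A|+L$, where $L$ is the number of lateral steps. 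Since $|B\setminus A|=(n-1)-|A\cap B|$, proving the theorem reduces to exhibiting such a sequence with $L\le |A\cap B|$.

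I would then organize the construction in two phases. In Phase~2 (the clean-up), suppose the current tree $T$ already contains $P_B:=B[s,t]$ as its directed $s$--$t$ path. Then for any $e\in B\setminus T$ the fundamental cycle of $e$ must contain an edge $f\in T\setminus B$ (otherwise the cycle would lie inside the forest $B$), and since $P_B\subseteq B$ this $f$ is off the path; by the observation $T-f+e$ is $st$-embracing. Each such step is a progress step, so Phase~2 consists of exactly $|B\setminus T|$ progress steps and contributes no lateral steps. Hence all lateral steps must be accounted for in Phase~1, whose goal is to transform the path $A[s,t]$ into $P_B$.

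For Phase~1 I would process the vertices of $P_B=(s=v_0,v_1,\dots,v_k=t)$ while maintaining the invariant that the current directed path agrees with $P_B$ on an initial segment $v_0\to\cdots\to v_i$ and that the edges $v_0v_1,\dots,v_{i-1}v_i$ are never deleted afterwards. To advance the agreement there are two situations. If the next target vertex already lies on the current path beyond $v_i$, a single \emph{reroute} move --- insert $v_iv_{i+1}\in B$ and delete the path edge leaving $v_i$ --- extends the agreement: the tail of the old path from $v_{i+1}$ to $t$ is still directed, so the new tree is $st$-embracing, while the excised segment becomes a hanging subtree. If instead the target vertex hangs off the path, I would first perform a \emph{detour}: insert its $B$-edge and delete the (off-path, hence safe) edge through which it currently hangs, re-positioning it so that a subsequent reroute can place it on the path. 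A reroute that deletes an edge of $B$, and every detour that deletes an edge of $B$, are the only lateral steps; all other deletions fall on edges outside $B$ and are progress steps.

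The main obstacle is precisely the bound $L\le|A\cap B|$. Each lateral step deletes an edge of $T\cap B$, and I would charge it to the common edge of $A\cap B$ that it removes. Making this charging injective requires showing (i) that one never has to delete, in a lateral step, an edge of $B\setminus A$ that was inserted earlier, and (ii) that each edge of $A\cap B$ is deleted in at most one lateral step --- once such an edge is re-inserted by a later progress step it must survive to the end. I expect this to be the delicate part: it calls for a secondary potential (for instance the total tree-distance between the current path and $P_B$, or the number of target vertices not yet correctly attached) that strictly decreases at every detour, together with an argument that the edges consumed by successive detours are distinct and originally belonged to $A\cap B$. Example~\ref{ex:graphic} already shows that detours are unavoidable and that the slack between $|B\setminus A|$ and $n-1$ is exactly what pays for them, which is encouraging evidence that the charging can be made to balance.
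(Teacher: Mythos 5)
Your reduction is sound and your second phase is correct: restricting every insertion to edges of $B$ makes monotonicity automatic, the length of such a sequence is $|B\setminus A|+L$ where $L$ counts the steps that delete a $B$-edge, and once the tree's directed $s$--$t$ path equals $B[s,t]$ every remaining exchange can delete an off-path edge of $T\setminus B$, contributing no lateral steps. This matches the paper's own second phase. The gaps are both in your Phase~1. First, the reroute/detour procedure is not well defined. After a detour you have already inserted $v_iv_{i+1}$ and deleted the hanging edge, so $v_{i+1}$ is attached to $v_i$ but still off the directed $s$--$t$ path; the promised ``subsequent reroute'' is supposed to insert $v_iv_{i+1}$, which is now in the tree, so no legal move is specified that ever brings $v_{i+1}$ onto the path. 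The underlying problem is that your invariant --- the actual directed $s$--$t$ path agrees with $P_B$ up to $v_i$ --- is stronger than what single $B$-edge insertions can maintain. The paper's proof uses the weaker invariant $B[s,v_i]\subseteq T$: the tree contains the initial segment, but the $s$--$t$ path of $T$ need not pass through $v_i$ at all; the path is only forced to equal $B[s,t]$ once the whole segment is present. With that weaker invariant each advance is one exchange: insert the next missing $f_i=v_{i-1}v_i$ and delete the \emph{last} edge $e$ of the fundamental cycle $T[v_{i-1},v_i]$ not lying in $B[v_i,t]$. The case analysis (Claim~\ref{cl:onestep}) showing the tree stays $st$-embracing even when $e$ lies on the path --- because everything after $e$ on the cycle is an already-present directed segment of $B[v_i,t]$ ending at some $v_j$, so $B[s,v_j]$ followed by $T[v_j,t]$ is a directed $s$--$t$ path --- is exactly the technical content your sketch lacks; your opening observation covers only the easy off-path deletion.

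Second, the bound $L\le|A\cap B|$, which you rightly identify as the crux, is left unproven: you gesture at a potential function and an injective charging but do not construct either. The paper never needs such an argument, because the deletion rule above makes the counting trivial: the deleted edge is chosen outside $B[v_i,t]$ and automatically lies outside $B[s,v_{i-1}]$ as well, so Phase~1 never deletes any edge of $B[s,t]$ and consists of at most $|B[s,t]|$ insertions of distinct edges of $B[s,t]$, while Phase~2 deletes only edges outside $B$ and consists of at most $|B\setminus B[s,t]|$ insertions of distinct edges of $B\setminus B[s,t]$. Each edge of $B$ is therefore inserted at most once over the whole sequence, giving the bound $n-1$ outright. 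In short: your framework (monotone-by-construction sequences, the $|B\setminus A|+L$ accounting, the clean-up phase) is compatible with the paper's proof, but the heart of the argument --- a well-defined exchange rule for building $B[s,t]$ and the reason no charging is needed --- is missing, so as it stands the proposal does not constitute a proof.
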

\begin{proof}
We present an algorithm that constructs a sequence $A=T_0,T_1,\dots,T_{q-1},T_q=B$ satisfying the conditions of the theorem in two phases. The high-level idea is as follows: in the first phase, we modify $A$ by successively adding the edges of $B[s,t]\setminus A$, obtaining a spanning tree $A'$ with $A'[s,t]=G[s,t]$; in the second phase, we replace the edges of $A'\setminus B$ with those of $B\setminus A'$ in an arbitrary order, without altering this directed $s$-$t$ path.

Let $B[s,t]=(f_1,\dots,f_k)$, where $f_i=v_{i-1}v_i$ for $i\in[k]$, with the convention $v_0=s$ and $v_k=t$. For the first phase, we need the following technical claim.

\begin{cla}\label{cl:onestep}
Let $T$ be an $st$-embracing spanning tree of $D$ with $B[s,v_{i-1}]\subseteq T$ and $f_i\notin T$ for some $i\in[k]$. Then there exists $e\in T$ such that $T'=T-e+f_i$ is an $st$-embracing spanning tree with $B[s,v_i]\subseteq T'[s,v_i]$.
\end{cla}
\begin{proof}
    Clearly, $T-e+f_i$ is a spanning tree if and only if $e\in T[v_{i-1},v_i]$. Let $e$ be the last arc along this path not contained in $B[v_i,t]$. Since $f_i\notin T$, such an arc exists and has at least one endpoint on the path $B[v_i,t]$. If $e\notin T[s,t]$, then $T-e+f_i$ satisfies the conditions of the theorem. Otherwise, $e$ lies on the directed path $T[s,t]$. By the choice of $e$, this implies $e\in\delta^{in}(v_j)$ for some $j\in[i,k]$ with $B[i,j]\subseteq T$. Therefore, removing $T-e+f_i$ contains the directed paths $B[s,v_j]$ and $T[v_j,t]$, and hence a directed $s$-$t$ path.
\end{proof}

By repeated application of Claim~\ref{cl:onestep}, we obtain a sequence of at most $|B[s,t]|$ exchanges that transforms $A$ into a spanning tree $A'$ through $st$-embracing spanning trees, with $A'[s,t]=B[s,t]$. By the basis exchange axiom for the underlying unoriented graphic matroid of $D$, for every arc $e\in A'\setminus B$ there exists $f\in B\setminus A'$ such that $A'-e+f$ is again a spanning tree. Since such an exchange does not affect the directed path $A'[s,t]$, we can transform $A'$ into $B$ by performing exactly $|B\setminus A'|$ exchanges so that all intermediate spanning trees are $st$-embracing. 

Since at every step of the algorithm we add an arc from $B$, the intersection of each intermediate spanning tree with $B$ does not decrease in any step; however, it may stay the same when the deleted arc $e$ lies in $B$ as well. Therefore, concatenating the two exchange sequences defined above yields a sequence of at most $|B[s,t]|+|B\setminus A'|=|B|=n-1$ exchange steps through $st$-embracing spanning trees satisfying the conditions of the theorem.
\end{proof}

A nice feature of Theorem~\ref{thm:directed} is that it guarantees a polynomial-length exchange sequence and, when the two spanning trees are disjoint, allows transforming one basis into the other along a shortest possible sequence of exchanges that preserves the $st$-embracing property at every step. Indeed, for disjoint spanning trees, $n-1$ exchanges are necessary, and the theorem shows that this many steps also suffice.

For a finer distinction that applies more generally to oriented matroids, we call an embracing exchange sequence $A=T_0,T_1,\dots,T_{q-1},T_q=B$ \emph{monotone} if $|T_i\cap B|\ge |T_{i-1}\cap B|$ for all $i\in[q]$, and \emph{strictly monotone} if $q=|A\setminus B|$. Using this terminology, Theorem~\ref{thm:directed} establishes the existence of a monotone embracing exchange sequence in oriented graphic matroids. Note that if $A$ and $B$ are disjoint, then $r$ is a lower bound on the number of exchanges required to transform $A$ into $B$. Hence, for disjoint bases, Conjecture~\ref{conj:embracing} would imply the existence of a strictly monotone embracing exchange sequence. In light of Theorem~\ref{thm:directed}, it is natural to ask the rather optimistic strengthening of the conjecture: {\it Does a monotone $e$-embracing exchange sequence always exist?}

\section{Matroid exchange analogues}
\label{sec:gabow}

Let $M=(S,\cB)$ be a matroid where $\cB$ denotes the family of bases, and let $A,B\in\cB$. A {\it symmetric exchange} takes elements $e\in A\setminus B$ and $f\in B\setminus A$ such that both $A-e+f$ and $B-f+e$ remain bases. Gabow~\cite{gabow1976decomposing} conjectured that for a rank-$r$ matroid $M$ with two disjoint bases $A$ and $B$, it is always possible to transform the pair $(A,B)$ into $(B,A)$ by a sequence of exactly $r$ symmetric exchanges. This problem is closely related to a broader question posed by White~\cite{white1980unique}, who studied when two ordered pairs of bases $(A,B)$ and $(A',B')$ can be connected by a sequence of symmetric exchanges. The conditions $A\cap B=A'\cap B'$ and $A\cup B= A'\cup B'$, called {\it compatibility}, are clearly necessary for such a transformation, and White conjectured that they are also sufficient. Building on these ideas, Hamidoune~\cite{cordovil1993bases} proposed an optimization variant that unifies Gabow's and White's settings.

\begin{conj}[Hamidoune]\label{conj:hamidoune}
For any compatible basis pairs $(A,B)$ and $(A',B')$ in a rank-$r$ matroid, the first pair can be transformed into the second using at most $r$ symmetric exchanges. 
\end{conj}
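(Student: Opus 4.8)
The plan is to reduce Conjecture~\ref{conj:hamidoune} to the case of disjoint bases and then attack it by an induction that fixes one misplaced element per exchange. First I would exploit compatibility to pass to a smaller matroid: set $I=A\cap B=A'\cap B'$ and $U=A\cup B=A'\cup B'$, and work in $M'=(M|U)/I$. Since no element of $I$ ever lies in a set difference and no element outside $U$ can enter, every symmetric exchange takes place inside $M'$; moreover $A-e+f$ (with $e\in A\setminus B$, $f\in B\setminus A$) always contains $I$, so it is a basis of $M$ exactly when $(A\setminus I)-e+f$ is a basis of $M'$. In $M'$ the sets $A\setminus I$ and $B\setminus I$ are \emph{disjoint} bases, and likewise for the primed pair. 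As $\operatorname{rank}(M')=r'=r-|I|\le r$, it suffices to connect the two disjoint pairs in at most $r'$ exchanges. Hence we may assume $A\cap B=\emptyset$, $A\cup B=S$ and $|S|=2r$, so that both pairs are bipartitions of $S$ into two bases.

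In this reduced setting I would induct on the number $|A\setminus A'|$ of misplaced elements. Writing $A'=(A\setminus X)\cup Y$ with $X\subseteq A$, $Y\subseteq B$ and $|X|=|Y|$, a single step would seek a \emph{monotone} symmetric exchange: elements $e\in X$ and $f\in Y$ with both $A-e+f$ and $B-f+e$ bases. Each such exchange places one more element correctly and never disturbs an already-correct one, so $|X|\le r'\le r$ exchanges would finish the job and meet the bound. To produce the exchange I would start from Brualdi's symmetric exchange theorem, which yields \emph{some} symmetric exchange, and then try to upgrade it to one supported on $X\times Y$ by relating the fundamental circuit of a chosen $f\in Y$ in $A$ to the fundamental cocircuit of a candidate $e\in X$ in $B$.

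The main obstacle is exactly this upgrade. Guaranteeing a distance-reducing symmetric exchange at every configuration is precisely the content of White's conjecture, and the sharp count $r$ is Gabow's conjecture, so no clean general argument can be expected from the elementary exchange axioms alone. I would therefore first secure the structured classes. For strongly base orderable matroids there is a global bijection $\phi\colon A\setminus B\to B\setminus A$ such that $(A\setminus Z)\cup\phi(Z)$ and $(B\setminus\phi(Z))\cup Z$ are bases for every $Z$; transporting $\phi$ toward the target partition turns the induction into a lattice-path argument that yields exactly $r$ monotone exchanges and settles these classes, and I would exploit the analogous extra structure for graphic and sparse paving matroids. The genuinely hard part, which remains open, is that local symmetric exchanges need not compose into a globally short monotone sequence; and the two-phase strategy that drives Theorem~\ref{thm:directed} has no evident analogue here, since the abstract setting offers no path-like object to fix in a first phase before cleaning up the rest.
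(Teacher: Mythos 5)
First, a point of context: Conjecture~\ref{conj:hamidoune} is stated in the paper as an \emph{open} conjecture, attributed to Hamidoune; the paper offers no proof of it and uses it only to motivate and contrast with Conjecture~\ref{conj:matroid1} and Example~\ref{ex:symmetric}. So the only question is whether your argument actually closes the conjecture, and it does not. What you get right: the reduction to disjoint bases is sound. With $I=A\cap B=A'\cap B'$ and $U=A\cup B=A'\cup B'$, every symmetric exchange moves only elements of $U\setminus I$, a set containing $I$ and contained in $U$ is a basis of $M$ exactly when its image is a basis of $(M|U)/I$, and the rank only drops, so it indeed suffices to handle disjoint bases. Your counting is also right: in the disjoint setting, an exchange with $e\in X=A\setminus A'$ and $f\in Y=A'\setminus A$ places both elements correctly (since $A'\cup B'=S$ forces $e\in B'$), so $|A\setminus A'|\le r$ such exchanges would finish.

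The genuine gap is the step you call the ``upgrade'': the existence, at \emph{every} configuration, of a symmetric exchange supported on $X\times Y$. Brualdi's theorem gives, for a chosen $f\in Y\subseteq B\setminus A$, some $e\in A\setminus B$ with both $A-e+f$ and $B-f+e$ bases, but nothing forces $e\in X$, and no known consequence of the exchange axioms restricts the partner to $X$. This is not a technical refinement one can hope to extract from circuit/cocircuit duality: iterating the restricted exchange is exactly a monotone sequence of length $|A\setminus A'|\le r$, so the step you need implies Conjecture~\ref{conj:hamidoune} in full (and hence Gabow's and White's conjectures as well); note also that it is strictly stronger than White's conjecture, which asserts only connectivity by symmetric exchanges with no length or monotonicity guarantee, so identifying the two, as your third paragraph does, is inaccurate. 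The side claims about strongly base orderable matroids are plausible but not proved here: the bijection $\phi$ guaranteed by strong base orderability need not map $X$ onto $Y$, and the ``lattice-path argument'' that is supposed to transport $\phi$ toward the target partition is never described; in any case these would be special cases, not the conjecture. As you concede in your closing paragraph, what remains after the (correct) reduction is precisely the open core of the problem, so the proposal is a reformulation rather than a proof.
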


There is a clear similarity between Conjectures~\ref{conj:matroid1} and~\ref{conj:hamidoune}, which raises the question of whether the former can be strengthened by allowing symmetric exchanges. More formally, given two $e$-embracing bases $A$ and $B$ of a rank-$r$ oriented matroid, one may ask whether there always exists a sequence of at most $r$ symmetric exchanges transforming $(A,B)$ into $(B,A)$ such that every intermediate pair in the sequence consists of $e$-embracing bases. We show that such a strengthening is not possible, even when the oriented matroid in question is an affine matroid and the bases are disjoint.

\begin{ex}\label{ex:symmetric}
Let $A=\{u,v,w\}$ and $B=\{x,y,z\}$ be the vertex sets of two $0$-embracing simplices in the plane whose vertices lie on the unit circle in the clockwise order $u,x,y,v,w,z$. Suppose further that the diameter passing through $w$ intersects the circle at a point lying between $u$ and $x$; see Figure~\ref{fig:symmetric}. These conditions imply that the only symmetric exchange preserving the $0$-embracing property of the simplices is the exchange of $v$ and $y$. Performing this exchange yields the simplices $A'=\{u,y,w\}$ and $B'=\{x,v,z\}$, for which the only possible symmetric exchange is again that of $y$ and $v$.
\end{ex}

\medskip
\paragraph{Acknowledgement.}

Kristóf Bérczi is grateful to Marco Caoduro for introducing him to the original problem on $0$-embracing sets, to András Imolay, Benjamin Schröter, and Lorenzo Vecchi for initial discussions on the matroidal extension, and to Frédéric Meunier for pointing out a connection with Hirsch’s conjecture. The research was supported by the Lend\"ulet Programme of the Hungarian Academy of Sciences -- grant number LP2021-1/2021, by the Ministry of Innovation and Technology of Hungary from the National Research, Development and Innovation Fund -- grant numbers ADVANCED 150556 and ELTE TKP 2021-NKTA-62, and by Dynasnet European Research Council Synergy project -- grant number ERC-2018-SYG 810115.

\bibliographystyle{abbrv}
\bibliography{simplex}

\end{document}